\documentclass[11pt,a4paper]{article}

\usepackage[english]{babel}
\usepackage[utf8x]{inputenc}
\usepackage[a4paper,top=3cm,bottom=2cm,left=3cm,right=3cm,marginparwidth=1.75cm]{geometry}
\usepackage{authblk}
\usepackage{amsmath}
\usepackage{graphicx}
\usepackage[colorinlistoftodos]{todonotes}
\usepackage[colorlinks=true, allcolors=blue]{hyperref}
\usepackage{amsfonts}
\usepackage{tikz-cd}
\usepackage{tikz}
\usepackage{pst-node}
\usepackage{mathtools}
\usepackage{amssymb}
\usepackage{amsmath,amsfonts,amssymb,amsthm,epsfig,epstopdf,titling,url,array}
\usepackage{comment}
\usepackage{enumitem}

\usepackage[T2A]{fontenc}

\makeatletter
\newtheorem*{rep@theorem}{\rep@title}
\newcommand{\newreptheorem}[2]{%
\newenvironment{rep#1}[1]{%
 \def\rep@title{#2 \ref{##1}}%
 \begin{rep@theorem}}%
 {\end{rep@theorem}}}
\makeatother

\theoremstyle{plain}
\newtheorem{thm}{Theorem}[section]

\newtheorem{lem}[thm]{Lemma}

\newtheorem{cor}[thm]{Corollary}
\theoremstyle{definition}
\newtheorem{rem}[thm]{Remark}
\newtheorem{defn}[thm]{Definition}
\newtheorem{exmp}[thm]{Example}


\theoremstyle{definition}

\theoremstyle{remark}

\numberwithin{equation}{section}

\newcommand{\SO}{\operatorname{SO}}
\newcommand{\GL}{\operatorname{GL}}

\newcommand\da{Diophantine approximation}

\newcommand{\R}{{\mathbb{R}}}
\newcommand{\Z}{{\mathbb{Z}}}
\newcommand{\bs}{{\mathbb{S}}}

\newcommand{\vp}{{\bf p}}
\newcommand{\vq}{{\bf q}}
\newcommand{\vv}{{\bf v}}

\newcommand{\ignore}[1]{{}}

\newcommand {\commdk}[1]   {\textcolor{red}{\bf #1}}

\newcommand\eq[2]{
\begin{equation}
\label{eq:#1}
{#2}
\end{equation}
}
\newcommand{\equ}[1]{\eqref{eq:#1}}

\title{Critical loci of convex domains in the plane}
\author{Dmitry Kleinbock, Anurag Rao, Srinivasan Sathiamurthy}

\newcommand{\Addresses}{{
  \bigskip
  \footnotesize

\textsc{Dmitry Kleinbock,   Brandeis University, Waltham MA
02454-9110}, {\tt kleinboc@brandeis.edu}

  \medskip

  \textsc{Anurag Rao,  Brandeis University, Waltham MA
02454-9110}, {\tt anrg@brandeis.edu}

  \medskip

  \textsc{Srinivasan Sathiamurthy,  Lexington MA 02420}, {\tt srinivasansathiamurthy@gmail.com}

}}

\begin{document}

\maketitle
\begin{abstract}
Let   $K$ be a bounded   convex domain in $\R^2$ symmetric about the origin. The {\sl critical locus} of $K$ is  defined to be the (non-empty compact) set of lattices $\Lambda$ in $\R^2$ of smallest possible covolume such that $\Lambda \cap K= \lbrace 0\rbrace$. These are classical objects in geometry of numbers; yet all previously known examples of critical loci were either finite sets or finite unions of closed curves.
In this paper we  give a new construction which, in particular,  furnishes examples of domains  having critical locus of arbitrary Hausdorff dimension between $0$ and $1$.
\end{abstract}

\section{Introduction}


\ignore{$H$ and $K$ denote open, bounded, symmetric, convex domains in $\R^2$.}

Counting lattice points in convex symmetric domains is a classical problem, which dates back to Gauss and belongs to 
 \textit{geometry of numbers},   the branch of number theory that studies number-theoretical problems by the use of geometric methods. Geometry of numbers in its proper sense was pioneered by Minkowski, see \cite{Mi} or the books \cite{ca, GL} for a comprehensive introduction to the subject.

A \textsl{lattice\/} $\Lambda$ in $\R^n$ is the set of all integer linear combinations of $n$ linearly independent vectors $\vv_1,\dots,\vv_n\in\R^n$;  in other words 
\begin{equation}\label{lambda}
\Lambda = \Z\vv_1 + \cdots + \Z\vv_n = g\Z^n,
\end{equation}
where $g\in\GL_n(\Z)$ is the matrix with column vectors $\vv_1,\dots,\vv_n\in\R^n$. The most natural example to consider is the grid of points $\Z^n \subset \R^n$, which is generated by the standard basis of $\R^n$ and corresponds to $g = I_n$,  the $n\times n$ identity matrix. Note that $g\Z^n = \Z^n$ if and only if $g\in\GL_n(\Z)$, where the latter stands for the group of invertible $n\times n$ matrices with integer entries. Consequently, the space $X_n$ of lattices in $\R^n$ is isomorphic, as a $\GL_n(\R)$-space, to the quotient space $\GL_n(\R)/\GL_n(\Z)$.

For any $\Lambda \in X_n$ of the form \eqref{lambda} we define its {\it covolume\/} $d(\Lambda)$ as the volume of the parallelepiped spanned by $\vv_1,\dots,\vv_n$. Clearly it does not depend on the choice of the generating set and is equal to the absolute value of the determinant of $g$.

\smallskip
Let now $K$ be a bounded   convex domain in $\R^n$ symmetric about the origin; denote by $V(K)$   the volume of $K$.

\begin{defn}
A lattice $\Lambda\subset \R^n$ is called $K$-\textsl{admissible} if $\Lambda \cap K= \lbrace 0\rbrace.$ To $K$ as above we associate a real number $\Delta(K)$, called the {\it critical determinant} of $K$, given by \begin{equation}
    \Delta(K):= \inf \left\lbrace d(\Lambda) : \Lambda \text{ is } K\text{-admissible}\right\rbrace
\end{equation}
\end{defn}

Perhaps one the most fundamental results in geometry of numbers is   Minkowski's Convex Body Theorem, see e.g.\ \cite[\S III.2.2]{ca}, which states that for $K$ as above, any lattice in $\R^n$ with covolume less than $V(K)/2^n$   must contain a  point of $K$ distinct from $0$. In other words, for any such $K$ the critical determinant $ \Delta(K) $ is positive, and, moreover, one has 
\begin{equation*}\label{minkimplication}
 \Delta(K) \ge \frac{V(K)}{2^n}.
\end{equation*}

This motivates the problem of exhibiting $K$-admissible lattices with the smallest covolume; those are called $K$-critical.



\begin{defn}
A lattice $\Lambda$ in $\R^n$ is called $K$-\textsl{critical} if it is a $K$-admissible lattice with $d(\Lambda)=\Delta(K).$ The set of $K$-critical lattices is denoted by $\mathcal{L}(K)$ and is called the \textsl{critical locus} of $K$.
\end{defn}

Since admissibility is preserved by taking limits, this set is non-empty by a sequential compactness argument due to Mahler, which can be found e.g.\ in \cite[\S V.4.2]{ca}.
The critical locus $\mathcal{L}(K)$ can be thought of as a subset of $X_n$, and we will  endow it with the  topology induced from $\operatorname{GL}_n(\R)/\operatorname{GL}_n(\Z)$.
With this topology it is compact, again by Mahler's argument.

It is worthwhile to point out  that  $K$-critical lattices are in one-to-one correspondence with the densest lattice packings of $\R^n$ by translates of  $K$. 
Indeed, suppose that $\Lambda$ is $K$-admissible. 
Then it is easy to see that the collection of sets $$\lbrace 2\vv + K:  \vv \in \Lambda\rbrace$$ is pairwise disjoint.
And, conversely, if such a collection of sets is pairwise disjoint for some $\Lambda$, then $\Lambda$ must be $K$-admissible. 
Minimizing the covolume of $\Lambda$ over all admissible lattices thus corresponds to maximizing the relative area covered by the collection of the above sets.
Another motivation for studying critical loci of convex symmetric domains comes from \da, see \S\ref{da} for more detail. 

\smallskip
From now on let us restrict our attention to the case $n=2$. Critical loci of planar domains have been systematically studied by Mahler in a series of papers written in the 1940s,  
see {Section} \ref{exandprelim} for a list of examples. 
Perhaps the most straightforward is the case when $$K = D:= \{(x,y): x^2 + y^2 < 1\}$$   is the unit disc: then  one has   $\Delta(D)=\sqrt{3}/2$, and 
\begin{equation}\label{disc}
 \mathcal{L}(D) =    \left\lbrace   k\left[{\begin{array}{cc}
   1 & 1/2 \\
   0 & \sqrt{3}/2 \\
  \end{array}}\right] \Z^2: k \in \SO(2) \right\rbrace,
\end{equation}
which is homeomorphic to $\bs^1$.

However all examples of sets $\mathcal{L}(K)$  previously constructed by Maher and others were either finite sets or finite unions of closed curves, and there does not exist a precise description of compact subsets of $X_2$ which can arise as critical loci of convex symmetric bounded domains of $\R^2$. 
(The situation for $n>2$ is even further from being understood.)
For instance, after studying Examples  \ref{firstex}--\ref{lastex} below, one may pose the following natural question:
$$
\textit{Is it possible for some } K \textit{ to have critical locus homeomorphic to a Cantor set?}
$$

We give a construction that shows that this is indeed possible. In particular, we prove

\begin{thm}\label{disc}
Each non-empty closed subset of $\mathcal{L}(D)$, where $D\subset \R^2$ is the unit disc, is the critical locus of some convex symmetric domain $K\supset D$.
\end{thm}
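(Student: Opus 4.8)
\emph{Proof proposal.}
The plan is to obtain $K$ by enlarging $D$ just enough to destroy exactly those rotated hexagonal lattices that are not supposed to survive, realizing $K$ concretely as an intersection of slabs tangent to $D$.

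First I would fix notation and record two monotonicity facts. Let $L_0$ be the hexagonal lattice generated by $(1,0)$ and $(1/2,\sqrt3/2)$, so that $\mathcal L(D)=\{rL_0:r\in\SO(2)\}$ as recalled above for the unit disc; for $\theta\in\R$ let $r_\theta\in\SO(2)$ be the rotation by $\theta$ and put $\Lambda_\theta:=r_\theta L_0$. Since $L_0$ is invariant under rotation by $\pi/3$, we have $\Lambda_\theta=\Lambda_{\theta'}$ iff $\theta\equiv\theta'\pmod{\pi/3}$, and $[\theta]\mapsto\Lambda_\theta$ gives the homeomorphism $\R/(\tfrac\pi3\Z)\cong\mathcal L(D)\cong\bs^1$; accordingly I regard the given $C$ as a nonempty closed subset of $\R/(\tfrac\pi3\Z)$. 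The six shortest nonzero vectors of $\Lambda_\theta$ are the unit vectors $u_{\theta+k\pi/3}$, $k=0,\dots,5$, where $u_\phi:=(\cos\phi,\sin\phi)$, while every other nonzero vector of $\Lambda_\theta$ has length at least $\sqrt3$. Now if $K\supseteq D$ then every $K$-admissible lattice is $D$-admissible, so $\Delta(K)\ge\Delta(D)=\sqrt3/2$; and if moreover some $\Lambda_\theta$ is $K$-admissible then $\Delta(K)\le d(\Lambda_\theta)=\sqrt3/2$, so $\Delta(K)=\sqrt3/2$, and — since every $D$-admissible lattice of covolume $\sqrt3/2$ lies in $\mathcal L(D)$ — we get $\mathcal L(K)=\{\Lambda_\theta:\Lambda_\theta\text{ is }K\text{-admissible}\}$. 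So it suffices to construct a convex symmetric domain $K\supseteq D$ such that $\Lambda_\theta$ is $K$-admissible precisely when $[\theta]\in C$.

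Then I would write down the candidate. Let $Z\subseteq\R/(2\pi\Z)$ be the preimage of $C$ under the natural $6$-fold covering $\R/(2\pi\Z)\to\R/(\tfrac\pi3\Z)$, i.e.\ the set of all angles $\theta+k\pi/3$ with $[\theta]\in C$; then $Z$ is closed, invariant under $\psi\mapsto\psi+\pi$, and (because $C\ne\emptyset$) each of its complementary arcs has length at most $\pi/3$. Put
\[
K:=\bigl\{\,x\in\R^2:\ \langle x,u_\psi\rangle^2<1\ \text{ for all }\psi\in Z\,\bigr\}.
\]
This set is convex (an intersection of slabs), symmetric about the origin, and contains $D$ since $\langle x,u_\psi\rangle^2\le|x|^2$. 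A short computation shows that its radial function $\rho_K$, determined by $tu_\phi\in K\iff t<\rho_K(\phi)$ for $t>0$, equals
\[
\rho_K(\phi)=\Bigl(\,\sup_{\psi\in Z}|\cos(\phi-\psi)|\,\Bigr)^{-1}=\sec\bigl(\dist(\phi,Z)\bigr),
\]
the last equality using $Z=Z+\pi$ and $\dist(\phi,Z)\le\pi/6$. Hence $\rho_K$ is continuous, so $K$ is a genuine (open, bounded, connected) domain; $1\le\rho_K\le\sec(\pi/6)=2/\sqrt3<\sqrt3$; and, since $Z$ is closed, $\rho_K(\phi)=1$ exactly when $\phi\in Z$. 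Finally I would check the admissibility dichotomy. If $[\theta]\in C$, the six length-$1$ vectors of $\Lambda_\theta$ sit at angles in $Z$, where $\rho_K=1$, hence they lie on $\partial K$ and not in the open set $K$; all remaining nonzero vectors of $\Lambda_\theta$ have length at least $\sqrt3>2/\sqrt3\ge\rho_K$ and so also lie outside $K$. Thus $\Lambda_\theta$ is $K$-admissible; in particular, as $C\ne\emptyset$, at least one $\Lambda_\theta$ is $K$-admissible, which validates the reduction of the previous paragraph. If $[\theta]\notin C$, the angles $\theta+k\pi/3$ all avoid $Z$, so $\rho_K>1$ there and the corresponding unit vectors of $\Lambda_\theta$ lie in $K$; hence $\Lambda_\theta$ is not $K$-admissible. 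Combining, $\mathcal L(K)=\{\Lambda_\theta:[\theta]\in C\}=C$, as required.

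I do not expect a serious obstacle here; the delicate points are (i) that $K$ is genuinely \emph{open}, which is lower semicontinuity of $\rho_K$ and is where the uniform bound $\dist(\cdot,Z)\le\pi/6$ — a reflection of the $6$-fold rotational symmetry of the hexagonal lattice — gets used; and (ii) the inequality $\rho_K<\sqrt3$, which ensures that only the length-$1$ shell of $\Lambda_\theta$ can ever enter $K$, so that $K$-admissibility of $\Lambda_\theta$ depends solely on whether its contact directions lie in $Z$. It is also essential that $K$ be open, so that the points of $\partial D\cap\partial K$ do not count as elements of $\Lambda_\theta\cap K$; if $K$ were closed, no $\Lambda_\theta$ would survive. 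Specializing $C$ to a closed set (e.g.\ a Cantor set) of prescribed Hausdorff dimension in $[0,1]$ then produces the examples advertised in the abstract; there $\partial K$ consists of arcs of the unit circle over $Z$, joined across the complementary arcs by pairs of segments tangent to $D$ that meet at corners.
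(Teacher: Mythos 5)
Your argument is correct, and it reaches the conclusion by a genuinely different construction from the paper's. The reduction in your first paragraph (if $K\supseteq D$ and some lattice of covolume $\sqrt3/2$ is $K$-admissible, then $\Delta(K)=\Delta(D)$ and $\mathcal L(K)$ is exactly the set of $K$-admissible members of $\mathcal L(D)$) is precisely the paper's Lemma \ref{observation}, so the skeleton is shared. Where you diverge is in building $K$: the paper parameterizes one-sixth of $\partial H$ by $[0,1]$, writes the complement of the closed set as $\bigsqcup(a_i,b_i)$, and takes the union of $H$ with curvilinear triangles $T_i\cup(-T_i)$ bounded by the arc and the two tangent lines at $\vp(a_i),\vp(b_i)$; you instead pass to the full preimage $Z$ under the $6$-fold cover and define $K$ as the intersection of the open tangent slabs $\{|\langle x,u_\psi\rangle|<1\}$, $\psi\in Z$, computing the radial function $\rho_K=\sec(\dist(\cdot,Z))$ explicitly. (For the disc the two recipes produce closely related sets --- yours is the $\pi/3$-symmetrized version of the paper's.) Your checks are all sound: $Z$ nonempty forces its complementary arcs to have length at most $\pi/3$, whence $1\le\rho_K\le 2/\sqrt3<\sqrt3$, which simultaneously gives openness of $K$ and confines the admissibility question to the unit shell of $\Lambda_\theta$; and $\rho_K(\phi)=1$ iff $\phi\in Z$ gives the exact dichotomy. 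What your route buys is a completely explicit, self-contained argument for the disc that avoids Mahler's machinery entirely (Theorem \ref{threepairs}, Lemmas \ref{configuration-of-points} and \ref{irredcritset}, and the $\mathcal C^1$-boundary statement of Remark \ref{actually-C1} are never needed, since you can just read off the length spectrum $1,\sqrt3,2,\dots$ of the hexagonal lattice and use its $\pi/3$-rotational symmetry). What it gives up is generality: the paper's tangent-triangle construction, backed by those lemmas, proves the same statement for every strictly convex irreducible $H$ (Theorem \ref{all-closed}), whereas your slab description and the $\sqrt3$ gap are special to the circle.
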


In particular, one can choose a closed subset of  $\mathcal{L}(D)$ homeomorphic to the middle-third Cantor set, or to any other closed fractal subset of $[0,1]$.

\smallskip

More generally, instead of $D$ we can take any  \textsl{irreducible} 
strictly convex\footnote{The strict convexity of $H$ means that the line joining two distinct points on $\partial H$ cannot intersect $\partial H$ at any other points.}  symmetric domain.
The concept of irreducibility, introduced by Mahler in \cite{ma1}, is as follows:
\begin{defn}\label{irred-defn}
A  convex symmetric bounded domain $K$ in $\R^2$ is said to be \textsl{irreducible} if each  convex symmetric bounded $H\subsetneqq K$ has $\Delta(H) < \Delta(K)$.
$K$ is said to be \textsl{reducible} if it is not irreducible, that is, if there exists $H{\subsetneqq} K$ with  $\Delta(H)=\Delta(K)$.
\end{defn}

Here and hereafter $H,K$ will always denote  convex symmetric bounded  domains in $\R^2$.  We remark that Mahler actually defines a stronger notion of irreducibility; namely, according to his definition $K$ is irreducible if all bounded symmetric \textit{star domains} $S\subsetneqq K$ have $\Delta(S) <\Delta(K)$. However for convex domains $K$ it is equivalent to the one given in Definition \ref{irred-defn}; this follows from \cite[Theorem 1]{ma2}, see a footnote to Lemma \ref{irredcritset} below.

\smallskip

Among the examples in Section  \ref{exandprelim} below, the square $[-1.1]^2$, the disc $D$  and their images under linear transformations are irreducible.
{A far more interesting example of an irreducible domain is the curvilinear octagon described in \cite[Section 12]{ma3}, see Remark~\ref{diagram-mahler}.}

\smallskip
Mahler studied irreducible domains and proved a number of fundamental results illustrating their importance.
For example, 
the  theorem stated below shows that irreducible domains are ubiquitous in the following sense:
\begin{thm}[\cite{ma2}, Theorem $1$]\label{containsirred}
Every $K\subset \R^2$ contains an irreducible $H$ with \linebreak $\Delta(H) = \Delta(K)$.
\end{thm}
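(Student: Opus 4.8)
The plan is to produce the irreducible domain $H$ as a minimal element of the family
\[
\mathcal{F}(K) := \{ H \subseteq K : H \text{ convex, symmetric, bounded, with } \Delta(H) = \Delta(K)\}
\]
with respect to inclusion, and then to argue that such a minimal element is automatically irreducible by the very definition. First I would check that $\mathcal{F}(K)$ is nonempty, which is immediate since $K \in \mathcal{F}(K)$. The substance is a Zorn's-lemma argument: I must show every chain in $\mathcal{F}(K)$ (totally ordered by inclusion) has a lower bound lying in $\mathcal{F}(K)$. The natural candidate for the lower bound of a chain $\{H_\alpha\}$ is the closure of the intersection $\bigcap_\alpha H_\alpha$ (or rather the interior of that intersection, to keep it a domain); this is again convex and symmetric. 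Two things need care here: (i) that the intersection still contains a neighborhood of the origin, hence is a genuine bounded convex symmetric domain and not something degenerate, and (ii) that its critical determinant is still equal to $\Delta(K)$ and not strictly smaller.

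For point (ii) I would use the lower semicontinuity of $\Delta$ under decreasing limits of domains, together with the monotonicity $\Delta(H') \le \Delta(H)$ for $H' \subseteq H$. Concretely: since each $H_\alpha \supseteq H := \overline{\bigcap_\alpha H_\alpha}$, monotonicity gives $\Delta(H) \le \Delta(K)$; for the reverse inequality I would take a $K$-critical lattice... more precisely, take for each $\alpha$ an $H_\alpha$-critical lattice $\Lambda_\alpha$ and extract, via Mahler's compactness criterion (as invoked in the excerpt for nonemptiness of critical loci), a convergent subnet $\Lambda_\alpha \to \Lambda$; then $\Lambda$ is $H$-admissible (admissibility passes to limits, as noted in the excerpt, using here that $H$ is closed) and $d(\Lambda) = \lim d(\Lambda_\alpha) = \Delta(K)$, forcing $\Delta(H) \ge \Delta(K)$ as well. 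This is where one must be a little careful about open versus closed domains and about whether $\bigcap H_\alpha$ has nonempty interior --- if the chain shrank the domain toward a lower-dimensional set, its critical determinant would be $0$, contradicting $\Delta = \Delta(K) > 0$ by Minkowski's theorem, so in fact the intersection is forced to retain a definite amount of ``girth'' in every direction. Making this precise --- showing $H$ contains a fixed ball, uniformly along the chain --- is the main technical obstacle; I expect it follows from the fact that $\Delta$ is continuous and scaling-homogeneous of degree $n$, so a uniform lower bound on $\Delta(H_\alpha)$ yields a uniform lower bound on the inradius.

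Granting the Zorn argument, let $H \in \mathcal{F}(K)$ be minimal. I claim $H$ is irreducible. Indeed, suppose not: then there is a convex symmetric bounded $H' \subsetneqq H$ with $\Delta(H') = \Delta(H) = \Delta(K)$, so $H' \in \mathcal{F}(K)$ and $H' \subsetneqq H$, contradicting minimality of $H$. Hence $H$ is irreducible, $H \subseteq K$, and $\Delta(H) = \Delta(K)$, which is exactly the assertion. I would close by remarking that the argument is soft --- it uses only monotonicity and limit-stability of $\Delta$ together with Mahler compactness --- and in particular does not produce $H$ explicitly; an explicit construction in the case $K = $ a square or disc recovers the known irreducible examples mentioned in the excerpt.
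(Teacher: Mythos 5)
First, a remark on scope: the paper does not prove this statement at all --- it is quoted directly from Mahler (\cite[Theorem 1]{ma2}) --- so there is no in-text proof to compare against. Your overall architecture (apply Zorn's lemma to the family $\mathcal{F}(K)$ and observe that a minimal element is irreducible by the very definition) is reasonable and close in spirit to Mahler's argument, which builds a decreasing sequence of subdomains rather than invoking Zorn. The inradius point you flag as the ``main technical obstacle'' is also fine and can be made precise cheaply: a convex symmetric domain inside $K$ with small inradius $\rho$ lies in a thin rectangle of half-widths $\rho$ and $R$ (with $R$ the circumradius of $K$), whose critical determinant is $\rho R$ by the parallelogram formula $\Delta=V/4$; since $\Delta$ is monotone this forces $\rho\ge\Delta(K)/R$ uniformly along the chain.

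The genuine gap is in your step (ii), and it is a reversed inequality at the crux of the whole proof. You produce, as a limit of $H_\alpha$-critical lattices, a single $H$-admissible lattice $\Lambda$ with $d(\Lambda)=\Delta(K)$ and conclude that this ``forces $\Delta(H)\ge\Delta(K)$.'' But $\Delta(H)$ is an \emph{infimum} over admissible lattices, so exhibiting one admissible lattice of covolume $\Delta(K)$ only gives $\Delta(H)\le\Delta(K)$ --- the direction you already had from monotonicity. What is actually needed is that \emph{every} $H$-admissible lattice has covolume at least $\Delta(K)$, i.e.\ that no new small admissible lattices appear upon passing to $H=\operatorname{int}\bigcap_\alpha H_\alpha$. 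This is a real issue, not a formality: $H$ can be strictly smaller than every $H_\alpha$ in a way that lets a lattice avoid $H$ while meeting each $H_\alpha$ (lattice points may sit on $\partial H$ yet inside $\bigcap_\alpha H_\alpha$). One correct route: for $\epsilon>0$, a finite-intersection (compactness) argument applied to the downward-directed family of compact sets $\overline{H_\alpha}\smallsetminus(1+\epsilon)H$ shows that $H_\alpha\subseteq(1+\epsilon)H$ for some $\alpha$ in the chain; then for any $H$-admissible $\Lambda$ the dilate $(1+\epsilon)\Lambda$ is $H_\alpha$-admissible, so $(1+\epsilon)^2 d(\Lambda)\ge\Delta(H_\alpha)=\Delta(K)$, and letting $\epsilon\to 0$ gives $\Delta(H)\ge\Delta(K)$. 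Without some argument of this kind the Zorn step does not close, and the proof as written is incomplete.
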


In Section \ref{exandprelim}, capitalizing on this and other results of Mahler, we prove
\begin{thm}
\label{irredlocus}
If $K$ is irreducible but not a parallelogram, then 
there is a continuous map $\phi: [0,1] \to 
X_2$ which descends to a homeomorphism 
$\bs^1\to\mathcal{L}(K)$.
\end{thm}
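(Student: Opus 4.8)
The plan is to identify $\mathcal{L}(K)$ with the quotient of the Jordan curve $\partial K$ by a fixed-point-free orientation-preserving homeomorphism of order $6$, and then to take for $\phi$ a path that traverses a single fundamental arc; the case of the disc $D$, where $\partial K=\bs^1$ and the homeomorphism is rotation by $2\pi/6$, is the prototype. Everything rests on two structural facts about the critical lattices of an irreducible $K$, which I expect to be either contained in Lemma~\ref{irredcritset} or a short consequence of it together with Mahler's rigidity results: (i) every $\Lambda\in\mathcal{L}(K)$ meets $\partial K$ in exactly six points, which after a suitable choice of signs and labels are $\pm\vv_1,\pm\vv_2,\pm\vv_3$ with $\vv_3=\vv_1+\vv_2$ and which occur in the counterclockwise cyclic order $\vv_1,\vv_3,\vv_2,-\vv_1,-\vv_3,-\vv_2$ along $\partial K$; and (ii) for every $\vv\in\partial K$ there is exactly one $K$-critical lattice containing $\vv$, which I denote $\Lambda_{\vv}$ (so $\Lambda_{-\vv}=\Lambda_{\vv}$). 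Fact (ii) is the heart of the matter, and this is exactly where both hypotheses enter: it fails badly for parallelograms, since for $K=[-1,1]^2$ every lattice $\bigl(\begin{smallmatrix}1&t\\0&1\end{smallmatrix}\bigr)\Z^2$ is critical and contains $(1,0)$, which is precisely why the case of a parallelogram must be excluded. I would therefore spend the bulk of the work deducing (i) and, above all, (ii) from Mahler's analysis of irreducible domains (Theorem~\ref{containsirred} and the description of $\partial K$ near a critical configuration); the rest is soft point-set topology, outlined next.

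Granting (i) and (ii), the first step is that $\vv\mapsto\Lambda_{\vv}$ is a continuous surjection $\partial K\to\mathcal{L}(K)$. Surjectivity is immediate from (i). For continuity, if $\vv_n\to\vv$ in $\partial K$ then, by Mahler's compactness criterion, any subsequence of $(\Lambda_{\vv_n})$ has a further subsequence converging to some $\Lambda'$; since $K$-admissibility and the covolume both pass to limits, $\Lambda'\in\mathcal{L}(K)$, and $\vv=\lim\vv_n\in\Lambda'\cap\partial K$, whence $\Lambda'=\Lambda_{\vv}$ by (ii). Hence $\Lambda_{\vv_n}\to\Lambda_{\vv}$. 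Being a continuous surjection from a compact space onto the Hausdorff space $\mathcal{L}(K)$, the map $\vv\mapsto\Lambda_{\vv}$ is a quotient map, and by (i)--(ii) its fibres are exactly the six-point sets $\Lambda\cap\partial K$, $\Lambda\in\mathcal{L}(K)$.

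The second step organizes these fibres by a circle map. One first checks that $\Lambda\mapsto\Lambda\cap\partial K$ is continuous on $\mathcal{L}(K)$: the six points have pairwise distances bounded below uniformly on $\mathcal{L}(K)$ (since $K$ contains a fixed disc, the first minimum is bounded below on $\mathcal{L}(K)$), so they cannot collide, and they cannot be created or destroyed because a lattice point lying near but not on $\partial K$ stays a definite distance from it. Define $\sigma\colon\partial K\to\partial K$ by letting $\sigma(\vv)$ be the point of $\Lambda_{\vv}\cap\partial K$ that cyclically follows $\vv$. Then $\sigma$ is continuous and injective---if $\sigma(\vv)=\sigma(\vv')=u$ then $u\in\Lambda_{\vv}\cap\Lambda_{\vv'}\cap\partial K$ forces $\Lambda_{\vv}=\Lambda_u=\Lambda_{\vv'}$ by (ii), hence $\vv=\vv'$---so $\sigma$ is a homeomorphism of $\partial K\cong\bs^1$; it is fixed-point-free (the cyclic successor of a point is a different point), hence orientation preserving, and (i) gives $\sigma^3(\vv)=-\vv$ and shows $\sigma$ has order $6$, each orbit being one of the six-point fibres. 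Consequently the $\sigma$-orbits are precisely the fibres of $\vv\mapsto\Lambda_{\vv}$, so $\mathcal{L}(K)\cong\partial K/\langle\sigma\rangle$, which is $\bs^1$ because a finite-order orientation-preserving homeomorphism of the circle is conjugate to a rotation, here to the rotation by $2\pi/6$.

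Finally I would write $\phi$ down. Fix $\vv_0\in\partial K$, let $\vv_0=\vp_1,\vp_2,\dots,\vp_6$ be the points of $\Lambda_{\vv_0}\cap\partial K$ in cyclic order, so $\sigma(\vp_i)=\vp_{i+1}$ (indices mod $6$), let $A\subset\partial K$ be the arc between the consecutive points $\vp_1$ and $\vp_2$, and choose a homeomorphism $\gamma\colon[0,1]\to A$ with $\gamma(0)=\vp_1$ and $\gamma(1)=\vp_2$. Set $\phi(t):=\Lambda_{\gamma(t)}$. This is continuous by the first step, and $\phi(1)=\Lambda_{\vp_2}=\Lambda_{\vv_0}=\phi(0)$ because $\vp_2\in\Lambda_{\vv_0}$ and (ii) applies; hence $\phi$ factors through $\bs^1=[0,1]/(0\sim1)$. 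The induced map $\bs^1\to\mathcal{L}(K)$ is onto because the arcs $A,\sigma A,\dots,\sigma^5A$ tile $\partial K$ (an orientation-preserving homeomorphism with $\sigma(\vp_i)=\vp_{i+1}$ carries the arc between $\vp_i$ and $\vp_{i+1}$ onto the one between $\vp_{i+1}$ and $\vp_{i+2}$), so $A$ meets every $\sigma$-orbit; and it is injective because a fundamental arc contains at most two points of any $\sigma$-orbit, and two only when the orbit is $\{\vp_1,\dots,\vp_6\}$ and the points are the endpoints $\vp_1,\vp_2$, so $\Lambda_{\gamma(t)}=\Lambda_{\gamma(t')}$ with $t\neq t'$ forces $\{t,t'\}=\{0,1\}$. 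A continuous bijection from the compact space $\bs^1$ to the Hausdorff space $\mathcal{L}(K)$ is a homeomorphism, which is the assertion. I expect the only genuine difficulty to be fact (ii); past that the argument is bookkeeping, with the one subtlety being to keep the cyclic orders straight and to invoke compactness and Hausdorffness at the two points flagged above.
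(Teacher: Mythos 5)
Your proposal is correct and follows essentially the same route as the paper: fix one critical lattice, parameterize the boundary arc between two of its consecutive points, and use the fact that each boundary point lies on a unique critical lattice to define $\phi$; moreover, the two structural facts you flag as the ``heart of the matter'' require no extra work, since (ii) is precisely Lemma \ref{irredcritset} (Mahler's Lemma 9) and (i) is Theorem \ref{threepairs} together with the interleaving Lemma \ref{configuration-of-points}. The only real divergence is cosmetic: you obtain continuity of $\vv\mapsto\Lambda_{\vv}$ from Mahler compactness plus uniqueness of the limit, while the paper deduces continuity of the successor point $\vq(t)$ directly from Lemma \ref{configuration-of-points}, and your order-$6$ homeomorphism $\sigma$ repackages, without changing, the same counting of boundary points.
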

In the case of a parallelogram, as made explicit in Example \ref{square}, the critical locus is topologically the wedge of two circles.
\ignore{With the above theorems in mind, 
\commdk{I don't think this paragraph belongs to the introduction. Better to say that in Section 2 we will explain why Theorem \ref{containsirred} implies Corollary \ref{cor-to-conversify}, and then have Proof of Corollary \ref{cor-to-conversify} after the list of examples there.}}

\begin{cor}\label{cor-to-conversify}
If $K$ is not a parallelogram, $\mathcal{L}(K)$ is a closed subset of an embedded circle $\mathcal{L}(H)$, the critical locus of some irreducible non-parallelogram $H$.
\end{cor}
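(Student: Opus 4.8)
The plan is to deduce the statement from Theorems~\ref{containsirred} and~\ref{irredlocus}. By Theorem~\ref{containsirred}, $K$ contains an irreducible domain $H$ with $\Delta(H)=\Delta(K)$. The inclusion $\mathcal L(K)\subseteq\mathcal L(H)$ is then formal: since $H\subseteq K$, every $K$-admissible lattice is $H$-admissible, and a lattice in $\mathcal L(K)$ has covolume $\Delta(K)=\Delta(H)$, so it lies in $\mathcal L(H)$. Moreover $\mathcal L(K)$ is compact in $X_2$ by Mahler's argument, hence a closed subset of $\mathcal L(H)$ in the subspace topology. Provided $H$ is not a parallelogram, Theorem~\ref{irredlocus} shows $\mathcal L(H)$ is an embedded circle, and the corollary follows.

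So the only real content is to check that the domain $H$ produced above is not a parallelogram. Since $K$ is not a parallelogram while $H\subseteq K$, it is enough to prove: \emph{if a parallelogram $P$ is \emph{properly} contained in a convex symmetric domain $K'$, then $\Delta(K')>\Delta(P)$}. Indeed, granting this, if $H$ were a parallelogram then $H\subsetneq K$ (as $K\neq H$) would give $\Delta(K)=\Delta(H)<\Delta(K)$, absurd.

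To prove the italicized claim I argue by contradiction. A linear change of variables multiplies all critical determinants by a common positive factor, so we may assume $P=(-1,1)^2$, with $K'\supsetneq(-1,1)^2$ convex symmetric and $\Delta(K')=\Delta(P)=1$. Since $K'$ is open and strictly larger than $(-1,1)^2$, it contains a point with $|x|>1$ or with $|y|>1$; applying a $90^\circ$ rotation (a symmetry of the square, hence harmless) we may assume the former. The key geometric step is that then $\{1\}\times(-1,1)\subseteq K'$: for $|y_0|<1$ the point $(1,y_0)$ lies in $\overline{K'}$, and a supporting line of $K'$ at $(1,y_0)$ would have to be $\{x=1\}$ — any other line through an interior point of the right edge of $[-1,1]^2$ passes through a point of the open square $(-1,1)^2\subseteq K'$, which is impossible for a supporting line of the open set $K'$ — forcing $K'\subseteq\{x\le 1\}$ and contradicting the chosen point. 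By symmetry $\{-1\}\times(-1,1)\subseteq K'$ as well.

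Finally, $\mathcal L(K')\neq\varnothing$ by Mahler; take $\Lambda\in\mathcal L(K')$. Then $d(\Lambda)=\Delta(K')=1$ and $\Lambda\cap(-1,1)^2=\{0\}$, so $\Lambda\in\mathcal L\big((-1,1)^2\big)$, which by Example~\ref{square} is the union of the families $\left(\begin{smallmatrix}1&t\\0&1\end{smallmatrix}\right)\Z^2$ and $\left(\begin{smallmatrix}1&0\\t&1\end{smallmatrix}\right)\Z^2$ (alternatively one may invoke Haj\'os's theorem that such a $\Lambda$ contains $\pm e_1$ or $\pm e_2$). In every case $\Lambda$ contains a point of the segment $\{1\}\times(-1,1)$ — namely $(1,0)$ for the first family and $(1,t)$ with $t\in(-1,1)$ for the second — and that point lies in $K'$, contradicting $K'$-admissibility of $\Lambda$. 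Hence $\mathcal L(K')=\varnothing$, the desired contradiction. The entire difficulty is concentrated in this last claim; the passage from it, together with Theorems~\ref{containsirred} and~\ref{irredlocus}, to the corollary is bookkeeping about admissibility.
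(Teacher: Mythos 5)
Your proof is correct, and its overall skeleton is the same as the paper's: invoke Theorem \ref{containsirred} to produce an irreducible $H\subset K$ with $\Delta(H)=\Delta(K)$, note the formal inclusion $\mathcal{L}(K)\subset\mathcal{L}(H)$ together with closedness, and reduce everything to showing that $H$ is not a parallelogram so that Theorem \ref{irredlocus} applies. Where you genuinely diverge is in that last step. The paper dispatches it with a short volume computation: a parallelogram has $\Delta(H)=V(H)/4$ (Example \ref{square}), while $V(H)<V(K)$ and Minkowski's convex body theorem give $V(H)/4<V(K)/4\le\Delta(K)$, contradicting $\Delta(H)=\Delta(K)$. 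You instead prove the stronger, self-contained statement that a convex symmetric domain properly containing a parallelogram has strictly larger critical determinant, via the supporting-line argument forcing $K'$ to contain an entire open edge $\{1\}\times(-1,1)$ of the square, combined with the observation that every square-critical lattice meets that edge. Your route is longer but avoids both Minkowski's theorem and the formula $\Delta=V/4$, and it isolates a reusable geometric fact (a parallelogram cannot be enlarged without raising the critical determinant). Both arguments are valid. One small point worth tightening: for the second family $\left[\begin{smallmatrix}1&0\\t&1\end{smallmatrix}\right]\Z^2$ the point $(1,t)$ need not satisfy $|t|<1$; replace it by $(1,t+n)$ for the integer $n$ with $|t+n|\le 1/2$, which is still a lattice point on the segment $\{1\}\times(-1,1)$.
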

This simple corollary of Theorems  \ref{containsirred} and  \ref{irredlocus} is proved in 
Section \ref{exandprelim}, and our main theorem is a partial converse to it:
\begin{thm}\label{all-closed}
If $H\subset \R^2$ is strictly convex and irreducible, so that $\mathcal{L}(H)$ is an embedded circle inside $X_2$, 
 then each non-empty closed subset of $\mathcal{L}(H)$ is the critical locus of some domain $K\supset H$.
\end{thm}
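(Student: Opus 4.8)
The plan is to build $K$ as an intersection of half-planes determined by the lattices we want to *exclude* from the critical locus. Fix the target closed set $\mathcal{C}\subseteq\mathcal{L}(H)$, and write $\mathcal{L}(H)\sm\mathcal{C}$ as a countable union of open arcs. For each lattice $\Lambda\in\mathcal{L}(H)\sm\mathcal{C}$ we have, by irreducibility of $H$ together with Theorem~\ref{containsirred} applied carefully (or rather by the classical fact that a critical lattice of a strictly convex body has exactly six points on $\partial H$, in three antipodal pairs, each pair spanning a face of the hexagon of minimal area), a finite set of boundary contact points $\pm\vv_1(\Lambda),\pm\vv_2(\Lambda),\pm\vv_3(\Lambda)\in\partial H$. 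The idea is to enlarge $H$ slightly near these contact points — but only in directions transverse to the arc — so that the enlarged body $K$ still has the same critical determinant $\Delta(K)=\Delta(H)$, still admits every lattice in $\mathcal{C}$, but no longer admits any lattice in the open arcs. Because strict convexity of $H$ means each contact point is isolated on $\partial H\cap\Lambda$, a controlled outward bump pushes $\Lambda$ inside $K$ while leaving nearby $\mathcal{C}$-lattices admissible.

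Concretely, first I would set up coordinates so that $H=D$ is the unit disc (replacing $H$ by its image under the linear map that does this; irreducibility, strict convexity, and the homeomorphism type of $\mathcal{L}$ are all $\GL_2(\R)$-invariant, and $\mathcal{L}(D)$ is the explicit circle in~\eqref{disc}). Parametrize $\mathcal{L}(D)$ by $\theta\in\bs^1$ via $\Lambda_\theta = k_\theta\Lambda_0$ where $k_\theta\in\SO(2)$ and $\Lambda_0$ is the standard hexagonal lattice; its six points on $\partial D$ are $k_\theta$ applied to the sixth roots of unity (up to the hexagonal shape). For a parameter value $\theta_0$ to be excluded, the three pairs of contact points of $\Lambda_{\theta_0}$ move with nonzero angular speed as $\theta$ varies, so for each contact direction $u$ there is a function $f(\theta)$ (the signed amount by which $\Lambda_\theta$ penetrates past radius $1$ in direction $u$, to first order) vanishing to order one at $\theta_0$. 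I would define $K = \bigcap_{\theta_0\notin\mathcal{C}} H_{\theta_0}$, where $H_{\theta_0}$ is $D$ with small symmetric "dimples pushed outward" — i.e. $D$ replaced by $\{x: \langle x,u\rangle\le 1+\eta\cdot g_{\theta_0}(x)\}$-type constraints near each of the three contact directions of $\Lambda_{\theta_0}$, with $\eta=\eta(\theta_0)$ summably small and the bump supported in an angular window shrinking as $\theta_0$ approaches $\mathcal{C}$. The summability and shrinking-support conditions guarantee $K$ is convex, symmetric, bounded, contains $D$, and that every $\Lambda_\theta$ with $\theta\in\mathcal{C}$ remains $K$-admissible (the bumps near non-$\mathcal{C}$ points are too far, angularly, from $\Lambda_\theta$'s contact points to matter). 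Meanwhile for $\theta_0\notin\mathcal{C}$, the bump aimed at $\Lambda_{\theta_0}$ makes a neighborhood of $\theta_0$ in the arc fail admissibility, so after taking the intersection over all excluded points the set of admissible lattices of covolume $\Delta(D)$ is exactly $\{\Lambda_\theta:\theta\in\mathcal{C}\}$.

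The one remaining point is to check $\Delta(K)=\Delta(D)$: since $K\supseteq D$ we automatically have $\Delta(K)\ge\Delta(D)$, and since $\mathcal{C}\ne\emptyset$ there is at least one $K$-admissible lattice of covolume $\Delta(D)$, so $\Delta(K)\le\Delta(D)$; hence equality and $\mathcal{L}(K)=\{\Lambda_\theta:\theta\in\mathcal{C}\}$, a homeomorphic copy of $\mathcal{C}$. It remains to confirm $K$ is genuinely convex — this is where the construction needs care, since we are taking an infinite intersection of bodies each obtained by pushing the boundary of $D$ outward, and "pushing outward" naively destroys convexity. The fix is to not bump $D$ itself but to bump a slightly smaller disc, or equivalently to take $K$ to be the convex hull of $D$ together with countably many small circular arcs of radius slightly more than $1$ placed tangentially at the excluded contact directions; convexity is then automatic, and one checks that the arcs are short enough that the convex hull still excludes each $\Lambda_\theta$, $\theta\in\mathcal{C}$, and contains each $\Lambda_{\theta_0}$, $\theta_0\notin\mathcal{C}$.

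The main obstacle I anticipate is precisely this tension: the bumps must be large enough (in the tangential/angular extent) to swallow a whole open neighborhood of each excluded $\theta_0$ — so that the union of excluded neighborhoods covers all of $\mathcal{L}(D)\sm\mathcal{C}$ — yet small enough (in outward extent, and with support shrinking near $\mathcal{C}$) that no $\mathcal{C}$-lattice is ever caught and that the total body stays convex and the critical determinant is unchanged. Quantifying the first-order motion of contact points (the non-vanishing of the relevant derivative $f'(\theta_0)\ne0$, which uses strict convexity so that the contact point is non-degenerate) and then choosing the two size parameters in the right order — angular window first, outward height second, both as summable sequences indexed by a countable dense subset of $\mathcal{L}(D)\sm\mathcal{C}$ whose neighborhoods exhaust the complement — is the technical heart of the argument.
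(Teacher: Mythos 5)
Your overall strategy --- enlarge $H$ outward over the complementary arcs so that the excluded lattices acquire a nonzero point in the interior of $K$, keep the contact points of the lattices in the target set $Z$ on $\partial K$, and then conclude via $\Delta(K)=\Delta(H)$ that $\mathcal{L}(K)$ is exactly the set of $K$-admissible members of $\mathcal{L}(H)$ --- is the same as the paper's (your last paragraph on $\Delta(K)=\Delta(D)$ is precisely Lemma \ref{observation}). But the write-up has three genuine gaps. First, the reduction to the unit disc is not available: an arbitrary strictly convex irreducible domain need not be a linear image of $D$, so your argument as written proves only Theorem \ref{disc}, not Theorem \ref{all-closed}; everything that leans on $\SO(2)$-equivariance of $\mathcal{L}(D)$ and ``nonzero angular speed'' of contact points must be replaced by something intrinsic. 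Second, the definition $K=\bigcap_{\theta_0\notin\mathcal{C}}H_{\theta_0}$ is backwards: each $H_{\theta_0}$ equals $D$ away from the three contact directions of $\Lambda_{\theta_0}$, so the intersection over distinct $\theta_0$ collapses back to $D$. You implicitly fix this when you switch to the convex hull of $D$ with external arcs, but that is a union-type construction, not an intersection. Third, and most importantly, the step you yourself flag as ``the technical heart'' --- choosing angular supports and heights so that the body stays convex, every excluded $\Lambda_{\theta_0}$ is caught, and no lattice over $\mathcal{C}$ is caught even though excluded parameters accumulate on $\mathcal{C}$ --- is never carried out. With the convex-hull version this is delicate: the hull of $D$ with an external point of height $\eta$ has angular footprint of order $\sqrt{\eta}$, so the two size parameters are not independent, and the footprint must still fit inside the (possibly very short) complementary interval containing $\theta_0$.

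The paper's construction dissolves all three difficulties at once. Working directly on $\partial H$, parameterize the arc between two consecutive contact points of a fixed $\Lambda\in Z$ by $\vp:[0,1]\to C$, write the complement of the corresponding compact $Q\subset[0,1]$ as $\bigsqcup(a_i,b_i)$, and over each open arc $\{\vp(t):a_i<t<b_i\}$ erect a single ``tent'': the curvilinear triangle bounded by that arc and the two tangent lines to $C$ at $\vp(a_i)$ and $\vp(b_i)$ (these exist by Remark \ref{actually-C1} and meet by strict convexity). Adjoining all tents and their antipodes to $H$ gives a convex $K$ automatically, because the tents are cut out by supporting lines; the whole open arc lands in the interior of $K$, so every $\phi(t)$ with $t\notin Q$ is non-admissible with no quantitative estimate needed; and for $t\in Q$ the three pairs $\pm\vp(t),\pm\vq(t),\pm(\vq(t)-\vp(t))$ remain on $\partial K$, so the converse direction of Theorem \ref{threepairs} gives $K$-admissibility without having to rule out other lattice points landing in a bump. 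If you replace your countable family of calibrated bumps by one tangent-line tent per complementary interval, your argument closes and agrees with the paper's.
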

 
In view of the irreducibility of $D$, Theorem \ref{disc} is clearly a special case of Theorem \ref{all-closed}.
Note that the above theorem does not hold for parallelograms.
Indeed, if $H$ is a parallelogram, one can consider any   subset of $\mathcal{L}(H)$ homeomorphic to the transverse intersection of two line segments; clearly it could never be embedded in a circle, and therefore, as Corollary \ref{cor-to-conversify} would imply, could not be a critical locus of any $K$.
At the end of Section  \ref{proof-of-theorem} we discuss the further possibility of the strict convexity assumption being removed for non-parallelogram irreducible domains.

As for the rest of the article, Section  \ref{exandprelim} gives the basic theorems on irreducible domains and critical lattices and examples to motivate the study,
and   Section \ref{proof-of-theorem} contains the proof of Theorem \ref{all-closed}.



\ignore{\subsection{Motivation and Further Remarks}
Given the relationship between $K$ and various critical lattices, we considered the possibilities of critical $\Lambda$ over all $K$.
\newline \newline
\noindent To make our question simpler, since $d(\Lambda) = \Delta(K)$, we can renormalize the lattices and instead only consider the space of normalized two dimensional lattices, $SL_2(\mathbb{R})/SL_2(\mathbb{Z})$.
\newline \newline
\noindent This however was too far of a step so we tried to find some parts of our set in the space of lattices, namely, any set homeomorphic to a non-empty, compact subset of $\mathbb{R}$.
}

\section{Preliminaries and examples}\label{exandprelim}

The following fundamental and intuitive theorem on admissible and critical lattices is taken from \cite[\S V.8.3]{ca} where one may also find a formal proof.
It shows that critical lattices are realised by inscribed parallelograms in the domains.
\begin{thm}\label{threepairs}
Let $\Lambda$ be $K$-critical, and let $C$ be the boundary of $K$. 
Then one can find three pairs of points $\pm \vp_1, \pm \vp_2, \pm \vp_3$ of the lattice on $C$.
Moreover these three points can be chosen such that
\begin{equation}\label{inscirbedhexagon}
\vp_1 + \vp_2 = \vp_3
\end{equation}
and any two vectors among  $\vp_1,\ \vp_2,\ \vp_3$ form a basis of $\Lambda.$

Conversely, if $\vp_1, \vp_2, \vp_3$ satisfying 
 {\eqref{inscirbedhexagon}} are on $C$,
then the lattice generated by $\vp_1$ and $\vp_2$ is $K$-admissible. 
Furthermore no additional (excluding the six above) point $\vp_4$ of $\Lambda$ is on $C$ unless $K$ is a parallelogram.
\end{thm}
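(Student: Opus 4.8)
The plan is to recast criticality as an extremal problem for the gauge and then read off the geometry by convexity. Let $F$ be the Minkowski gauge of $K$, so that $F$ is a norm, $K=\{F<1\}$, $C=\{F=1\}$, and a lattice is $K$-admissible precisely when $\min_{\vv\in\Lambda\sm\{0\}}F(\vv)\ge1$. Writing $\lambda_1(\Lambda)=\min_{\vv\in\Lambda\sm\{0\}}F(\vv)$ and using $\lambda_1(c\Lambda)=c\,\lambda_1(\Lambda)$, $d(c\Lambda)=c^2 d(\Lambda)$, one sees that, after rescaling to covolume $1$, the $K$-critical lattices are exactly the maximizers of $\lambda_1$ on $\SL_2(\R)/\SL_2(\Z)$. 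The lattice points lying on $C$ are then the minimal vectors $\mathcal M=\{\vv\in\Lambda:F(\vv)=1\}$, a finite symmetric set, and each $\vp\in\mathcal M$ is primitive, since otherwise a proper integer sub-multiple of $\vp$ would lie on the open segment $(0,\vp)\subset K$.

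The core is to show a maximizer carries at least three pairs in $\mathcal M$; here I deform by $g_t=\exp(tZ)$, $Z\in\mathfrak{sl}_2(\R)$ (three-dimensional), noting that only finitely many lattice vectors have gauge near $1$, so admissibility for small $t$ is controlled by the $F(g_t\vp)$ for $\vp\in\mathcal M$. If $\mathcal M$ is a single pair $\pm\vp_1$, a volume-preserving stretch in the $\vp_1$-direction strictly increases $F(\vp_1)$ while keeping all other gauges above $1$, raising $\lambda_1$ — impossible at a maximizer; hence there are at least two pairs, and in dimension two the first two successive minima may be taken to be a basis $\vp_1,\vp_2$. The first-order derivatives $\ell_i(Z)=\frac{d}{dt}\big|_0 F(g_t\vp_i)$ are two linear functionals on $\mathfrak{sl}_2(\R)$, but the constraint $\det g_t\equiv1$ forces them to be negatively proportional (already visible for the disc, where $\Lambda=\Z^2$ gives $\ell_2=-\ell_1$), so no first-order motion raises both gauges simultaneously.

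This degeneracy is the main obstacle: the first-order test is inconclusive, and one must pass to second order. I would move $Z$ in the common kernel of $\ell_1,\ell_2$, along which both $F(g_t\vp_i)$ are stationary, and use the convexity of $F$ to make the second-order term strictly positive, so that both gauges increase and $\lambda_1$ grows — again contradicting maximality. The only escape is that $F$ is affine along this direction at $\vp_1$ or $\vp_2$, i.e.\ $C$ contains a true segment there; in that flat case I slide along the segment until a further lattice vector reaches gauge $1$. Either way $\mathcal M$ has at least three pairs. To obtain \eqref{inscirbedhexagon}, take a reduced basis $\vp_1,\vp_2\in\mathcal M$ and let $\vp_3\in\mathcal M$ be the remaining minimal vector lying angularly between $\vp_1$ and $\vp_2$; the six points being in convex position on $C$, symmetric, primitive and of minimal gauge forces $\vp_3=\vp_1+\vp_2$ after fixing signs. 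Since then the transition matrices among $\{\vp_1,\vp_2\}$, $\{\vp_1,\vp_3\}$, $\{\vp_2,\vp_3\}$ are unimodular, any two of the three are a basis.

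For the converse, assume $F(\vp_1)=F(\vp_2)=F(\vp_1+\vp_2)=1$ and let $H=\mathrm{conv}(\pm\vp_1,\pm\vp_2,\pm\vp_3)$; a short count (Pick's formula) shows the only lattice points in $H$ are $0$ and the six vertices, all of gauge $1$. Any other nonzero lattice point $w$ is either a positive multiple of a vertex, for which $F(w)\ge1$ is immediate, or is such that one of the six vertices $v$ lies in an open segment $(w,u)$ with $u\in H$ — the elementary fact that the cones vertically opposite to the interior angles at the vertices of $H$ cover its exterior. In the latter case, writing $v=\lambda w+(1-\lambda)u$ with $\lambda\in(0,1)$, convexity of $F$ gives $1=F(v)\le\lambda F(w)+(1-\lambda)F(u)\le\lambda F(w)+(1-\lambda)$, so $F(w)\ge1$; thus $\Lambda\cap K=\{0\}$ and $\Lambda$ is admissible. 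Finally, a fourth pair $\pm\vp_4\subset C$ makes the displayed inequality an equality at $w=\vp_4$, which forces $F$ to be affine on $[\vp_4,u]$, i.e.\ a genuine line segment of $C$ through a vertex; pushing the resulting equalities around the symmetric configuration forces $C$ to consist of four segments, so $K$ is a parallelogram (conversely the square with $\Lambda=\Z^2$ indeed carries four pairs). I expect the second-order exclusion of the two-pair case to be the decisive difficulty, since there the convexity of the gauge must be exploited beyond first order, with the flat-boundary subcase feeding directly into the parallelogram alternative.
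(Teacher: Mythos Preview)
The paper does not supply its own proof of this theorem; it quotes the statement from Cassels \cite[\S V.8.3]{ca} and refers the reader there for a formal argument. So there is nothing to compare against, and I will simply assess your outline on its merits.

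Your variational viewpoint (maximize $\lambda_1$ over $\SL_2(\R)/\SL_2(\Z)$, then deform by $\exp(tZ)$) is a reasonable strategy, and the converse direction (admissibility of the lattice generated by $\vp_1,\vp_2$) is essentially right. But two steps in the forward direction do not hold as written.

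First, the sentence ``the constraint $\det g_t\equiv1$ forces $\ell_1,\ell_2$ to be negatively proportional'' is incorrect. Working in the basis $\vp_1,\vp_2$, write $Z\vp_1=\alpha\vp_1+\gamma\vp_2$ and $Z\vp_2=\beta\vp_1-\alpha\vp_2$; then $\ell_1(Z)=\alpha+c_1\gamma$ and $\ell_2(Z)=-\alpha+c_2\beta$ with $c_1=DF_{\vp_1}(\vp_2)$, $c_2=DF_{\vp_2}(\vp_1)$. These are negatively proportional only when $c_1=c_2=0$, which is a geometric condition on the supporting lines at $\vp_1,\vp_2$, not a consequence of ${\rm tr}\,Z=0$. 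Your disc example with $\Lambda=\Z^2$ is misleading: $\Z^2$ is not $D$-critical, and $\ell_2=-\ell_1$ there is an accident of orthogonality. The correct logic is the contrapositive: if $\ell_1,\ell_2$ were \emph{not} negatively proportional one could choose $Z$ with both $\ell_i(Z)>0$, contradicting maximality; hence at a maximizer $c_1=c_2=0$ and only then does one pass to second order. Your write-up inverts cause and effect here.

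Second, the whole scheme presupposes that $F$ is differentiable at $\vp_1,\vp_2$ (so that the $\ell_i$ are linear) and regular enough for a second-order expansion. Nothing in the hypotheses grants this: for the unit square the critical lattice $\Z^2$ has $(\pm1,\pm1)$ on $C$, and $F$ is not differentiable there. The classical proofs avoid this by using explicit one-parameter families of linear maps and bare convexity inequalities rather than derivatives of $F$; your sketch would need either a reduction to the smooth case or a subdifferential version of the argument.

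A smaller point: the ``elementary fact that the cones vertically opposite to the interior angles at the vertices of $H$ cover its exterior'' is false for general exterior points (for the hexagon with vertices $(1,0),(1,1),(0,1),(-1,0),(-1,-1),(0,-1)$ the point $(3/2,7/10)$ lies outside every such cone). What is true, and what you actually need, is that these six cones together contain every nonzero \emph{lattice} point outside $H$; this is an arithmetic check, not a convexity fact, and should be argued as such.
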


In light of this theorem, we may discuss the critical loci for the following examples of domains $K\subset \R^2$. 

\begin{exmp}\label{firstex} \rm
As was mentioned in the introduction, when $K = D$, the unit disc, one has that $\Delta(D)=\sqrt{3}/2$, and the set of critical lattices is given by \eqref{disc}.
\begin{equation*}\label{diagram-disc}
    \begin{tikzpicture}[scale=1.5, baseline=(current  bounding  box.center)]
    \draw [<->](-1.5,0) -- (1.5,0);
    \draw [<->] (0,-1.5) -- (0,1.5);
    
    \draw [thick, blue] (1,0) to [out=90, in=0] (0,1);
    \draw [thick, blue] (0,1) to [out=180, in=90] (-1,0);
    \draw [thick, blue] (-1,0) to [out=270, in=180] (0,-1);
    \draw [thick, blue] (0,-1) to [out=0, in=270] (1,0);
    
    \fill [red] (1,0) circle[radius=0.05];
    \node [above] at (1.1,0) {\tiny $\vp$};
    \fill [red] (1/2,1.73205/2) circle[radius=0.05];
    \node [above] at (1/2,1.73205/2) {\tiny $\vq$};
    \fill [red] (-1/2,1.73205/2) circle[radius=0.05];
    \node [above] at (-1/2,1.73205/2) {\tiny $\vq-\vp$};
    \fill [red] (-1,0) circle[radius=0.05];
    \fill [red] (-1/2,-1.73205/2) circle[radius=0.05];
    \fill [red] (1/2,-1.73205/2) circle[radius=0.05];
    \fill [red] (0,0) circle[radius=0.05];
    
        \node [below] at (0,-1.6) { \footnotesize Points $\vp$ and $\vq$ generate a critical lattice.};
    \node [below] at (0,-1.83) {\footnotesize All other critical lattices are obtained by rotating this one.};
    \end{tikzpicture}
\end{equation*}
More precisely, the map $\phi:\R \to X_2$
given by
\begin{equation*}
    \phi(t) := \left[{\begin{array}{cc}
   \cos t & -\sin t \\
   \sin t & \cos t \\
  \end{array}}\right] \left[{\begin{array}{cc}
   1 & 1/2 \\
   0 & \sqrt{3}/2 \\
  \end{array}}\right]  \Z^2
\end{equation*}
descends to a homeomorphism $\R/(\frac{\pi \Z}{3}) \simeq \mathcal{L}(D)$.

This is an example of an irreducible domain.
For if $H$ is a subset of $D$ with $\Delta(H) =\Delta(D)$, then all $D$-critical lattices are $H$-admissible and thus also $H$-critical.
Then the first part of {Theorem} \ref{threepairs} shows that each $D$-critical lattice $\phi(t)$ above must contain three pairs of points on $\partial H$.
Since $H\subset D$, those three pairs of points must coincide exactly with the set $\phi(t) \cap \partial D$, thus showing that the boundaries of $H$ and $D$ coincide.

\end{exmp}
\begin{exmp}\label{square} \rm
When $K$ is the square with side-length $2$ and   sides perpendicular to the coordinate axes, Theorem \ref{threepairs} again shows that the critical lattices are given by 
\begin{equation}
    \left\lbrace \left[{\begin{array}{cc}
   1 & t \\
   0 & 1 \\
  \end{array}}\right]  \Z^2: t\in \R\right\rbrace \bigcup  \left\lbrace \left[{\begin{array}{cc}
   1 & 0 \\
   t & 1 \\
  \end{array}}\right] \Z^2: t\in \R\right\rbrace,
\end{equation}
which is topologically a wedge of two circles.
In this case too, $K$ is irreducible by the same application of Theorem \ref{threepairs} as in the case of the unit disc. 
\begin{equation*}\label{diagram-square}
    \begin{tikzpicture}[scale=1.5, baseline=(current  bounding  box.center)]
    \draw [<->](-1.5,0) -- (1.5,0);
    \draw [<->] (0,-1.5) -- (0,1.5);
    
    \draw [thick, blue] (1,-1) -- (1,1);
    \draw [blue, thick] (1,1) -- (-1,1);
    \draw [blue, thick] (-1,1) -- (-1,-1);
    \draw [blue, thick] (-1,-1) -- (1,-1);
    
    \fill [red] (1,0) circle[radius=0.05];
    \fill [red] (-1,0) circle[radius=0.05];
    \fill [red] (1,1) circle[radius=0.05];
    \fill [red] (-1,-1) circle[radius=0.05];
    \fill [red] (-1,1) circle[radius=0.05];
    \fill [red] (1,-1) circle[radius=0.05];
    \fill [red] (0,1) circle[radius=0.05];
    \fill [red] (0,-1) circle[radius=0.05];
    \fill [red] (0,0) circle[radius=0.05];

        \node [below] at (0,-1.6) {\footnotesize Shearing the standard lattice along each axis gives rise to all the critical lattices.};
    \end{tikzpicture}
\end{equation*}
\end{exmp}

\begin{exmp}\label{hexex} \rm
When $K$ is a hexagon, there is exactly one critical lattice. 
This lattice is spanned by the two vectors on the midpoints of adjacent sides.
Moreover, in this case one has $V(K) = 4\Delta(K)$.
This follows as a corollary of Theorem \ref{threepairs}, and a proof can be found in \cite[\S V.8.4]{ca}.
It is also known \cite[Theorem 5]{ma3} that when $K$ is a $2n$-gon with $n\geq 3$, there can only be a finite number of critical lattices. 

\begin{equation*}\label{diagram-hex}
    \begin{tikzpicture}[scale=1.5, baseline=(current  bounding  box.center)]
    \draw [<->](-1.5,0) -- (1.5,0);
    \draw [<->] (0,-1.5) -- (0,1.5);
    
    \draw [blue, thick] (1.2,-1) -- (0.9,0.5);
    \draw [blue, thick] (-1.2,1) -- (-0.9,-0.5);
    
    \draw [blue, thick] (0.9, 0.5) -- (0,1);
    \draw [blue, thick] (-0.9, -0.5) -- (0,-1);
    
    \draw [blue, thick] (0, 1.0) -- (-1.2,1);
    \draw [blue, thick] (0, -1.0) -- (1.2,-1);
    
    \fill [red] (1.05,-0.25) circle[radius=0.05];
    \node [right] at (1.05,-0.25) {\tiny $\vp$};
    \fill [red] (-1.05,0.25) circle[radius=0.05];
    
    \fill [red] (0.45,0.75) circle[radius=0.05];
    \node [right] at (0.45,0.77) {\tiny $\vq$};
    \fill [red] (-0.45,-0.75) circle[radius=0.05];
    
    \fill [red] (-0.6,1) circle[radius=0.05];
    \node [above] at (-0.6,1) {\tiny $\vq-\vp$};
    \fill [red] (0.6,-1) circle[radius=0.05];
    
    \fill [red] (0,0) circle[radius=0.05];
    
    \node [below] at (0,-1.6) {\footnotesize The lattice generated by $\vp$ and $\vq$ is the only critical lattice.};
    \end{tikzpicture}
\end{equation*}
\end{exmp}
\begin{exmp}\label{lastex} \rm
If $K$ is the unit ball of the $L^p$ norm on $\R^2$ ($1 < p < \infty, \ p\neq 2$), then, 
depending on $p$, the critical locus comprises either of one or two lattices.
For a historical account and a proof of this (Minkowski's conjecture), see the paper \cite{ggm}.
\end{exmp}
\begin{rem} \rm
If $H = gK$ for some  $g\in \operatorname{GL}_2(\R)$, then the critical loci of $H$ and $K$ are related by $\mathcal{L}(H)=g\mathcal{L}(K)$.
Also, irreducibility is preserved under transformation by $g$.
\end{rem}

We now collect the results needed to make Theorem \ref{irredlocus} more precise. 
Moreover, the parameterization of the critical locus arising will be crucial to our result.

\begin{lem}[\cite{ma2}, Lemma $6$]\label{configuration-of-points}
{Suppose that} $K$ is not a parallelogram, $\Lambda$ is $K$-critical, and let $\vp_i: i = {1, \dots, 6}$ be the points of $\Lambda$ 
contained in  $C = \partial K$, labelled in a counter-clockwise order.
Let $A_i$ denote open segment of $C$ between $\vp_i$ and $\vp_{i+1}$.
If $\Lambda'$ is another $K$-critical lattice distinct from $\Lambda$, then each $A_i$ contains exactly one point of $\Lambda'$.
\end{lem}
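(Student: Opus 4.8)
The plan is to normalize, trap each boundary arc of $\Lambda$ in a small triangle, and then use admissibility of $\Lambda'$ to force the interleaving. Applying a suitable $g\in\GL_2(\R)$ — which preserves critical loci, admissibility, and the property of not being a parallelogram — we may assume $\Lambda=\Z^2$; by Theorem~\ref{threepairs} and a further change of coordinates in $\SL_2(\Z)$, the six points of $\Lambda$ on $C$, in counter-clockwise order, are
\[
\vp_1=(1,0),\quad \vp_2=(1,1),\quad \vp_3=(0,1),\quad \vp_4=-\vp_1,\quad \vp_5=-\vp_2,\quad \vp_6=-\vp_3,
\]
and, $K$ not being a parallelogram, these together with the origin exhaust $\bar K\cap\Z^2$ (Theorem~\ref{threepairs} plus admissibility). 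Put $P:=\operatorname{conv}\{\pm\vp_1,\pm\vp_2,\pm\vp_3\}\subseteq\bar K$. The lattice $\Lambda'$ likewise has exactly six points on $C$, and $d(\Lambda')=\Delta(K)=1$; we must show these fall one into each open arc $A_1,\dots,A_6$. The geometric heart is a \emph{trapping lemma}: $A_i\subseteq T_i:=\operatorname{conv}\{\vp_i,\vp_{i+1},\vp_i+\vp_{i+1}\}$ (for instance $A_1\subseteq\operatorname{conv}\{(1,0),(1,1),(2,1)\}$). To see this, note $T_i$ is cut out by three affine inequalities; a point $x\in A_i$ violating one of them would, together with two suitably chosen vertices among the six, exhibit one of those vertices as a strict convex combination of points of $\bar K$, hence as an interior point of $\bar K$ — impossible, since it lies on $C$. (This is a short barycentric check, uniform in $i$.) Crucially, since $\{\vp_i,\vp_{i+1}\}$ is a $\Z^2$-basis and $\vp_i-\vp_{i+1}$ is the third primitive edge direction of the hexagon, the difference body is $T_i-T_i=P$.

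Suppose now that distinct points $\vq,\vq'$ of $\Lambda'$ both lay in one open arc $A_i$. Then $\vq-\vq'$ is a nonzero vector of $\Lambda'$ lying in $T_i-T_i=P\subseteq\bar K$; since $\Lambda'$ is $K$-admissible, $\vq-\vq'$ cannot lie in the open domain $K$, so $\vq-\vq'\in\bar K\setminus K=C$, and hence (as $\operatorname{int}P\subseteq\operatorname{int}\bar K=K$) $\vq-\vq'\in C\cap P\subseteq\partial P$. For a triangle $T$, any $u,v\in T$ with $u-v\in\partial(T-T)$ must have at least one of $u,v$ equal to a vertex of $T$ (the supporting functional of the relevant face of $T-T$ is maximized on a vertex of $T$ and minimized on the opposite face). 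So one of $\vq,\vq'$ is a vertex of $T_i$. But $\vp_i,\vp_{i+1}$ are the endpoints of the closed arc $\bar A_i$ (so they do not lie in the open arc), and $\vp_i+\vp_{i+1}$ is a lattice point which is none of the six, hence lies outside $\bar K$. This contradiction shows no open arc contains two points of $\Lambda'$.

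It remains to rule out a point of $\Lambda'$ sitting at a vertex $\vp_k$. Say $(1,0)\in\Lambda'$. By Theorem~\ref{threepairs}, $(1,0)$ belongs to a basis $\{(1,0),w\}$ of $\Lambda'$ with $(1,0)+w$ also on $C$; then $|\det((1,0),w)|=d(\Lambda')=1$ forces $w=(c,\pm1)$, and after replacing $w$ by $-w$, $w=(c,1)\in C$ and $(c+1,1)\in C$. If $c\in\{0,1\}$ then $\{(1,0),w\}$ generates $\Z^2$, so $\Lambda'=\Z^2=\Lambda$, contrary to hypothesis; otherwise $(c,1),(0,1),(1,1)$ are three distinct collinear points of $C$, forcing the line $y=1$ to support $K$ with $[(0,1),(1,1)]\subseteq C$, and — by central symmetry — the same at $y=-1$, so $\bar K$ lies in the slab $\{\,|y|\le1\,\}$ with flat edges along $y=\pm1$. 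The remaining step, and this is the delicate one, is to deduce that a convex body inscribed between these two flat edges and having $\Z^2$ critical must be exactly the parallelogram bounded by those edges and the two segments joining their endpoints — contradicting the hypothesis that $K$ is not a parallelogram. I expect this to be provable by showing that if the two non-flat boundary arcs are not straight segments, then a small tilt-and-shear of $\Z^2$ produces a $K$-admissible lattice of covolume strictly less than $1$ (all of whose nonzero points can be kept in the slab or on the supporting lines $y=\pm1$), contradicting $\Delta(K)=1$. Granting this, the six points of $\Lambda'$ lie in the six open arcs with at most one per arc, hence exactly one in each, as claimed. The normalization and the trapping lemma are routine; the main obstacle is precisely this last reduction, which is where the non-parallelogram hypothesis is genuinely used.
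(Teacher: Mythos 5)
The paper does not actually prove this statement: it is imported verbatim as Lemma 6 of Mahler's \emph{On irreducible convex domains} [Ma2], so your attempt should be judged as a self-contained proof rather than against an argument in the text. Most of it is sound. The normalization to $\Lambda=\Z^2$ with boundary points $\pm(1,0),\pm(1,1),\pm(0,1)$, the trapping lemma $A_i\subseteq T_i$, the identity $T_i-T_i=P$, and the deduction that two points of $\Lambda'$ in one open arc would force one of them to be a vertex of $T_i$ (impossible, since $\vp_i,\vp_{i+1}\notin A_i$ and $\vp_i+\vp_{i+1}\in\Lambda\setminus\bar K$) all check out. One local imprecision: a \emph{strict convex combination} of points of $\bar K$ need not be an interior point (think of the midpoint of a flat edge); in the trapping lemma you must exhibit the offending vertex as an interior point of a nondegenerate triangle with vertices in $\bar K$, e.g.\ if $\vp\in A_1$ had both coordinates $>1$ then $(1,1)$ lies in the interior of $\operatorname{conv}\{\vp,(1,0),(0,1)\}$. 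This is fixable exactly along the lines you indicate.

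The genuine gap is the one you flag yourself: ruling out a point of $\Lambda'$ at a vertex $\vp_k$. You correctly reduce to the situation where $\bar K$ lies in the slab $\{|y|\le 1\}$ with a flat edge $[\alpha,\beta]\times\{1\}$ containing $(0,1),(1,1),(c,1),(c+1,1)$, $c\notin\Z$, so $\ell:=\beta-\alpha>1$. But your proposed finish --- a tilt-and-shear producing an admissible lattice of covolume $<1$ --- is not carried out, and it is not the natural weapon here: in this configuration \emph{every} shear $M_t:=\Z(1,0)+\Z(t,1)$ is $K$-admissible of covolume exactly $1$ (its nonzero points have $|y|\ge 1$ except for $(\pm m,0)$, which lie outside $K\cap\{y=0\}=(-1,1)\times\{0\}$), so beating covolume $1$ by a perturbation of $\Z^2$ is at best delicate. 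A cleaner and complete finish is available from what you already have. Concavity of $t\mapsto\max\{x:(x,t)\in\bar K\}$ together with $(\pm 1,0)\in\partial K$ forces $\ell\le 2$, with equality only when both lateral boundary arcs are straight, i.e.\ when $\bar K$ is the parallelogram with vertices $(\alpha,\pm... )$ --- excluded by hypothesis. Hence $1<\ell<2$, so for some $t$ the coset $t+\Z$ meets $[\alpha,\beta]$ in exactly one point, and the corresponding $M_t$ is a $K$-critical lattice with only \emph{four} points on $C$, contradicting Theorem \ref{threepairs}. With this replacement your argument becomes a complete proof; as written, the vertex case remains an acknowledged hole (and, minor point, the dichotomy should be ``$c\in\Z$'' versus ``$c\notin\Z$'', not ``$c\in\{0,1\}$'', since any integer $c$ gives $\Lambda'=\Z^2$).
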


\begin{lem}[\cite{ma2}, Lemma $9$\footnote{Even though the statement of Lemma \ref{irredcritset} is taken almost verbatim from Mahler's paper, our meaning is changed in light of the different definitions of {irreducibility}. Logically speaking, one can use \cite[Theorem 1]{ma2}  to show that the two definitions of irreducibility are equivalent.}]\label{irredcritset}
Assume $K$ is not a parallelogram and is irreducible. Then for each $\vp \in C = \partial K$  there is exactly one critical lattice of $K$ containing $\vp$.
\end{lem}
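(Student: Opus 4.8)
The plan is to separate the statement into two assertions and handle them by quite different means. \emph{Uniqueness} — that a given $\vp\in C$ lies on at most one $K$-critical lattice — follows formally from the results already established and does not even require irreducibility. \emph{Existence} — that $\vp$ lies on at least one — is where Definition \ref{irred-defn} enters, through a limiting (Mahler compactness) argument.

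For uniqueness, suppose $\vp\in C$ belongs to two distinct $K$-critical lattices $\Lambda$ and $\Lambda'$. Since $K$ is not a parallelogram, Theorem \ref{threepairs} says that $\Lambda\cap C$ consists of exactly the six points $\pm\vp_1,\pm\vp_2,\pm\vp_3$; relabel them $\vp_1,\dots,\vp_6$ in counter-clockwise order as in Lemma \ref{configuration-of-points}. In particular $\vp$ is one of these $\vp_i$. But Lemma \ref{configuration-of-points}, applied to the distinct critical lattice $\Lambda'$, forces the six points of $\Lambda'$ on $C$ to lie one apiece in the \emph{open} arcs $A_i$ between consecutive $\vp_i$'s, and hence to be disjoint from $\{\vp_1,\dots,\vp_6\}$. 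This contradicts $\vp\in\Lambda'\cap C$, so at most one critical lattice passes through $\vp$.

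For existence, fix $\vp_0\in C$ and remove from $K$ two small centrally symmetric caps, cut off by chords near $\vp_0$ and near $-\vp_0$, to obtain a convex symmetric \emph{proper} subdomain $H_\vre\subsetneq K$ with $K\smallsetminus H_\vre$ shrinking to $\{\pm\vp_0\}$ as $\vre\to 0$. Since $H_\vre\subset K$ we have $\Delta(H_\vre)\le\Delta(K)$, and since $K$ is irreducible, Definition \ref{irred-defn} upgrades this to the strict inequality $\Delta(H_\vre)<\Delta(K)$. Let $\Lambda_\vre$ be an $H_\vre$-critical lattice (non-empty by Mahler). Then $d(\Lambda_\vre)=\Delta(H_\vre)<\Delta(K)$, so $\Lambda_\vre$ cannot be $K$-admissible; as $\Lambda_\vre\cap H_\vre=\{0\}$, its only nonzero points inside $K$ lie in the removed caps, and by central symmetry we may pick a nonzero $\vq_\vre\in\Lambda_\vre$ in the cap near $\vp_0$. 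Extracting a convergent subsequence $\Lambda_\vre\to\Lambda_0$ by Mahler's sequential compactness argument, the limit $\Lambda_0$ is $K$-admissible (admissibility passes to limits and $H_\vre\to K$), and the squeeze $d(\Lambda_0)=\lim d(\Lambda_\vre)=\lim\Delta(H_\vre)\le\Delta(K)\le d(\Lambda_0)$ shows $\Lambda_0$ is in fact $K$-critical. Finally $\vq_\vre\to\vp_0$ as the cap collapses, and since bounded lattice points pass to the limit, $\vp_0\in\Lambda_0$, producing the desired critical lattice through $\vp_0$.

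The cap construction and the lattice-point limit are routine; the conceptual crux is the existence step, where the strict inequality $d(\Lambda_\vre)<\Delta(K)$ (i.e.\ irreducibility) forces an offending point into the vanishing caps while the Mahler limit simultaneously restores admissibility and traps $\vp_0$. The one point demanding care is verifying that $\Lambda_0$ is genuinely $K$-admissible, i.e.\ that no interior point of $K$ sneaks into $\Lambda_0$: any such point would, for all small $\vre$, be a nonzero point of $\Lambda_\vre$ still lying inside $H_\vre$, contradicting $H_\vre$-admissibility.
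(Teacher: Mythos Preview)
The paper does not prove this lemma itself; it is quoted from Mahler \cite{ma2} with only a footnote reconciling the two notions of irreducibility. So there is no in-paper proof to compare against, and your argument stands as a self-contained justification.

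Your proof is correct. The uniqueness half is exactly the formal consequence of Lemma \ref{configuration-of-points} you describe: since $K$ is not a parallelogram, Theorem \ref{threepairs} pins $|\Lambda'\cap C|=6$, and Lemma \ref{configuration-of-points} places one point of $\Lambda'$ in each \emph{open} arc $A_i$, accounting for all six and excluding the endpoints $\vp_i$. The existence half via symmetric cap-removal and Mahler compactness is also sound; the key step---that $d(\Lambda_\vre)=\Delta(H_\vre)<\Delta(K)$ forces a nonzero lattice point into the shrinking caps---is precisely where Definition \ref{irred-defn} does its work, and your final paragraph correctly disposes of the one genuinely delicate point (that no nonzero interior point of $K$ survives in the limit lattice). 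One small thing worth making explicit is why Mahler compactness applies: the covolumes $d(\Lambda_\vre)=\Delta(H_\vre)$ are bounded above by $\Delta(K)$ and bounded below (together with the shortest vectors) because every $H_\vre$ contains a fixed small ball once $\vre$ is small.
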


\begin{proof}[Proof of Theorem  \ref{irredlocus}]
Fix $\Lambda$, any critical lattice for $K$, and let $C$ denote the boundary of $K$.
Let $\vp_i$ denote the six points of $C \cap \Lambda$ oriented counter-clockwise.
By convexity, one can parameterize the closed segment of $C$ between $\vp_1$ and $\vp_2$ by a continuous map $\vp:[0,1] \to \R^2$.

From Lemma \ref{irredcritset} above, we see that each $\vp(t)$ belongs to a unique $K$-critical lattice $\Lambda_t$.
Let $\vq(t)$ denote the point of $\Lambda_t \cap C$ coming after  $\vp(t)$ going counter-clockwise in angle.
We claim that the function $\vq$ is continuous. 
If not,
then for some $t$  we would find a neighborhood $U$ of $\vq(t)$ with a converging sequence $t_n\to t$ but $\vq(t_n) \notin U$.
Continuity of $\vp$ implies $\vp(t_n) \to \vp(t)$.
Without loss of generality, we can assume $t_n <t$ so that applying Lemma \ref{configuration-of-points} to a lattice $\Lambda$ containing a point in $U\cap C$ between $\vq(t)$ and each $\vq(t_n)$ gives a contradiction.
Thus we can set $$\phi(t) = \left[
   \vp(t)\ \vq(t) \right]\Z^2,$$ which, in light of Lemma \ref{configuration-of-points}, descends to the required homeomorphism.
\end{proof}

\begin{proof}[Proof of Corollary \ref{cor-to-conversify}]\label{proof-of-1.6}
Suppose that $K$ is not a parallelogram.
By Theorem \ref{containsirred}, $K$ contains an irreducible $H$ with $\Delta(K)=\Delta(H)$.
It follows that we have the containment $\mathcal{L}(K) \subset \mathcal{L}(H)$.
Moreover we claim that $H$ cannot be a parallelogram.

For if this were the case, as made explicit in Example \ref{square} above, $\Delta(H)$ would have to equal $\frac{V(H)}{4}$.
However, we also have that $V(H) < V(K)$ and that $\frac{V(K)}{4} \leq \Delta(K)$.
Together these give 
$$\Delta(H) = \frac{V(H)}{4} < \frac{V(K)}{4} \leq \Delta(K),
$$
a contradiction.

Since $H$ is not a parallelogram, Theorem \ref{irredlocus} applies and we are done.
\end{proof}

\begin{rem}\label{actually-C1} \rm
Mahler proved (cf.\ Theorem $3$ in \cite{ma3}) that for such $K$, the boundary $C$ is a $\mathcal{C}^1$ submanifold of $\R^2$. Thus the preceding proof can be modified to show that $\mathcal{L}(K)$ is a $\mathcal{C}^1$ submanifold of the space of lattices.
\end{rem}

\section{Proof of {Theorem} \ref{all-closed}}\label{proof-of-theorem}
The proof of our theorem will be based on the following simple observation:
\begin{lem}\label{observation}
If $H\subset K$  and one of the $H$-critical lattices is also $K$-admissible, then $\mathcal{L}(K)$ is exactly the set of $K$-admissible lattices in $\mathcal{L}(H)$.
\end{lem}
\begin{proof}
Since $H \subset K$, we have that $\Delta(H) \leq \Delta(H)$.
Further, the existence of an $H$-critical and $K$-admissible lattice shows that we have the equality $\Delta(H) = \Delta(K)$.

First, let $\Lambda$ be $K$-critical.
It is then also $H$-admissible with $d(\Lambda) = \Delta(K) = \Delta(H)$. 
Thus it is $H$-critical and, by definition, also $K$-admissible.

For the other containment, let $\Lambda \in \mathcal{L}(H)$ be $K$-admissible.
Since $\Delta(H) = \Delta(K)$, $\Lambda$ must in fact be $K$-critical.
\end{proof}
\begin{proof}[Proof of Theorem \ref{all-closed}]
Take $H$, not a parallelogram and irreducible, and say $Z\subset \mathcal{L}(H)$ is  non-empty and compact.
Let $\Lambda \in Z$.
Let $C$ denote the boundary of $H$, and, as before, let us label the six points of $\Lambda \cap C$ as  $\vp_1,\dots, \vp_6$ (ordered by angle).
Let $\vp(t)$ denote a continuous parameterization of the segment $C$ from $\vp_1$ to $\vp_2$ by the interval $[0,1]$ (see diagram \eqref{tent-picture} below), let $\Lambda_t$ denote the unique critical lattice containing $\vp(t)$,  and let $\vq(t)$ denote the point of $\Lambda_t \cap C$ coming after $\vp(t)$ (going counter-clockwise). 
\begin{equation}\label{tent-picture}
    \begin{tikzpicture}[scale=2, baseline=(current  bounding  box.center)]
    \draw [<->](-1.5,0) -- (1.5,0);
    \draw [<->] (0,-1.5) -- (0,1.5);
    
    \draw [thick, blue] (1,0) to [out=90, in=0] (0,1);
    \draw [thick, blue] (0,1) to [out=180, in=90] (-1,0);
    \draw [thick, blue] (-1,0) to [out=270, in=180] (0,-1);
    \draw [thick, blue] (0,-1) to [out=0, in=270] (1,0);
    
    \fill [red] (1,0) circle[radius=0.03];
    \fill [red] (1/2,1.73205/2) circle[radius=0.03];
    \fill [red] (-1/2,1.73205/2) circle[radius=0.03];
    \fill [red] (-1,0) circle[radius=0.03];
    \fill [red] (-1/2,-1.73205/2) circle[radius=0.03];
    \fill [red] (1/2,-1.73205/2) circle[radius=0.03];
    \fill [red] (0,0) circle[radius=0.03];
    
    \node [above] at (1.15,-0.2) {\tiny $\vp(0)$};
    \node [above] at (1.1/2,1.83205/2) {\tiny $\vp(1) = \vq(0)$};
    \node [above] at (-1/2,1.73205/2) {\tiny $\vq(1)$};
    
    \fill [orange] (0.988,0.156) circle[radius=0.03];
    \node [right] at (0.988,0.156) {\tiny $\vp(a_i)$};
    
    \fill [orange] (0.707,0.707) circle[radius=0.03];
    \node [right] at (0.707,0.747) {\tiny $\vp(b_i)$};
    
        \node [below] at (0,-1.6) {\footnotesize An illustration when $H$ is a disc.};
    \end{tikzpicture}
    \quad
    \begin{tikzpicture}[scale=2, baseline=(current  bounding  box.center)]
    \draw [<->](-1.5,0) -- (1.5,0);
    \draw [<->] (0,-1.5) -- (0,1.5);
    
    \draw [thick, blue] (1,0) to [out=90, in=280] (0.988,0.156);
    \draw [thick, blue] (-1,0) to [out=270, in=100] (-0.988,-0.156);
    \draw [thick, blue] (0.707,0.707) to [out=135, in=0] (0,1);
    \draw [thick, blue] (-0.707,-0.707) to [out=315, in=180] (0,-1);
    \draw [thick, blue] (0,1) to [out=180, in=90] (-1,0);
    \draw [thick, blue] (0,-1) to [out=0, in=270] (1,0);
    \draw [dotted, thick, blue] (0.988,0.156) to [out=100 , in=315] (0.707,0.707);
    \draw [dotted, thick, blue] (-0.988,-0.156) to [out=280 , in=135] (-0.707,-0.707);
    
    \fill [red] (1,0) circle[radius=0.03];
    \fill [red] (1/2,1.73205/2) circle[radius=0.03];
    \fill [red] (-1/2,1.73205/2) circle[radius=0.03];
    \fill [red] (-1,0) circle[radius=0.03];
    \fill [red] (-1/2,-1.73205/2) circle[radius=0.03];
    \fill [red] (1/2,-1.73205/2) circle[radius=0.03];
    \fill [red] (0,0) circle[radius=0.03];
    
    \fill [orange] (0.988,0.156) circle[radius=0.03];
    
    \fill [orange] (0.707,0.707) circle[radius=0.03];
    
    \draw [thick, blue] (0.707, 0.707) -- (0.9366,0.4771);
    \draw [thick, blue] (0.988,0.156) -- (0.9366,0.4771);
    
    \fill [orange] (-0.988,-0.156) circle[radius=0.03];
    \fill [orange] (-0.707,-0.707) circle[radius=0.03];
    
    \draw [thick, blue] (-0.988,-0.156) -- (-0.9366,-0.4771);
    \draw [thick, blue] (-0.707, -0.707) -- (-0.9366,-0.4771);
    
        \node [below] at (0,-1.6) {\footnotesize The union of $H$ and two  curvilinear regions.};
    \end{tikzpicture}
\end{equation}

We use the parameterization $\phi(t) := [\vp(t)\ \vq(t)]\Z^2$ for $\mathcal{L}(H)$. We can now assume that our non-empty, closed set is the image under $\phi$ of some compact $Q \subset [0,1]$  with $\lbrace 0, 1 \rbrace \subset Q$.
Set 
\begin{equation*}\label{tenthere}
    [0,1]\smallsetminus Q = \bigsqcup(a_i,b_i).
\end{equation*}

Now, for each $i$, consider $\vp(a_i),\ \vp(b_i)$.
In light of Remark $\ref{actually-C1}$, we can find tangent lines $L_1, L_2$ to $C$ at these two points.
Let $T_i$ be the curvilinear triangle bounded by the three curves $L_1, L_2$ and  $C_i  = \lbrace \vp(t): a_i \leq t \leq b_i\rbrace$.
We define $K_i$ to be the union $H \cup T_i \cup (-T_i)$
as is illustrated in Diagram \eqref{tent-picture} below. 
Note that the lines $L_1, L_2$ are not parallel; one argument is that $-L_1$ is also a tangent line to $C$ by symmetry and strict convexity would prevent $L_2$ from being parallel to either of these.
Moreover, strict convexity also implies that the curve segment $C_i$ is contained in $K_i$.

We now define $K$ to be the union $\bigcup K_i$. Clearly
$K$ is  a bounded, open, convex, symmetric domain containing $H$.
Moreover, $K$ and $H$ share the boundary points $$\vp(t),\  \vq(t), \ \vq(t) - \vp(t)$$ for each $t \in Q$, which shows that the $H$-critical lattice $\phi(0)$ is $K$-admissible.

Lemma \ref{observation} thus applies to show that $\mathcal{L}(K)$ is exactly the set of $K$-admissible lattices of $\mathcal{L}(H)$.
For any $t\in Q$, the $H$-critical lattice $\phi(t)$ is $K$-admissible by Theorem \ref{threepairs}.
On the other hand, for $t\notin Q$, $\phi(t)$ is not $K$-admissible since $C_i\subset K$. This ends the proof.
\end{proof}

\begin{rem} \rm
Note the construction of above can be modified to ensure that $K$ is $\mathcal{C}^1$ by smoothing an edge of each curvilinear triangle.
\end{rem}

\begin{rem}\label{diagram-mahler} \rm Our method does not work for non-parallelogram $H$ which are not strictly convex, and it even seems unlikely that the theorem holds in that generality.
The interested reader is encouraged to examine the following irreducible domain, explicitly described in \cite[Section $12$]{ma3}. It is central to a conjecture, going back to Reinhardt \cite{re}, concerning domains $H$ that have minimal packing density $\frac{\operatorname{area}(H)}{\Delta(H)}$.

\begin{equation*}
\begin{tikzpicture}[scale=1.5, baseline=(current  bounding  box.center)]
    \draw [<->](-1.5,0) -- (1.5,0);
    \draw [<->] (0,-1.5) -- (0,1.5);
    
    \fill [red] (1,0) circle[radius=0.05];
    \fill [red] (-1,0) circle[radius=0.05];
    \fill [red] (0,0) circle[radius=0.05];
    \fill [red] (0.5,0.91421) circle[radius=0.05];
    \fill [red] (-0.5,-0.91421) circle[radius=0.05];
    \fill [red] (-0.5, 0.91421) circle[radius=0.05];
    \fill [red] (0.5, -0.91421) circle[radius=0.05];
    \draw [thick, blue] (1,-0.29289) -- (1,0.29289);
    \draw [thick, blue] (1,0.29289) to [out=90 , in=135] (0.91421,0.5);
    
    \draw [thick, blue] (0.91421,0.5) -- (0.5,0.91421);
    \draw [thick, blue] (0.5,0.91421) to [out=135 , in=180] (0.2928,1);
    
    \draw [thick, blue] (0.2928,1) -- (-0.2928,1);
    \draw [thick, blue] (-0.2928,1) to [out=180 , in=45] (-0.5, 0.91421);
    
    \draw [thick, blue] (-0.5, 0.91421) -- (-0.91421,0.5);
    \draw [thick, blue] (-0.91421,0.5) to [out=225 , in=270] (-1,0.29289);
    
    \draw [thick, blue] (-1,0.29289) -- (-1,-0.29289);
    \draw [thick, blue] (-1,-0.29289) to [out=270 , in=315] (-0.91421,-0.5);
    
    \draw [thick, blue] (-0.91421,-0.5) -- (-0.5,-0.91421);
    \draw [thick, blue] (-0.5,-0.91421) to [out=315 , in=0] (-0.2928,-1);
    
    \draw [thick, blue] (-0.2928,-1) -- (0.2928,-1);
    \draw [thick, blue] (0.2928,-1) to [out=0 , in=45] (0.5, -0.91421);
    
    \draw [thick, blue] (0.5, -0.91421) -- (0.91421,-0.5);
    \draw [thick, blue] (0.91421,-0.5) to [out=45 , in=90] (1,-0.29289);
    
        \node [below] at (0,-1.6) {\footnotesize The curvilinear octagon with one of its critical lattices.};
    \end{tikzpicture}
    \end{equation*}
   Reinhardt proved that such a domain exists and proposed this curvilinear octagon as the unique candidate. {See  \cite{hales}
for a historical account along with some partial results.}
 \end{rem}

\section{Connections to \da} \label{da} 
Given a non-decreasing function $\psi:\R^+\to (0,1]$, say that 
a 
real number $\alpha$ is \textsl{$\psi$-Dirichlet-improvable}  if 
the system of inequalities 
\begin{equation}\label{dt}
|\alpha q-p|\le \psi(T)
\quad \textrm{and}\quad |q|\le T \end{equation}
has a nonzero solution  $(p,q)\in \Z^2$ for all sufficiently large $T$ .
Dirichlet's Theorem, see  \cite[Theorem~II.1E]{S}, asserts that the system \eqref{dt} always has a nonzero integer solution if $\psi(T) = 
1/{T
}$. On the other hand, 
it is known  \cite{Mor, DS} that the choice $\psi(T) = 
c/{T
}$ with $c<1$ produces a null set of $\psi$-Dirichlet-improvable numbers. A precise criterion for this set to have zero/full measure has been recently obtained by the first-named author and Wadleigh  \cite{KWa}. It was also shown in that paper that the property of being $\psi$-Dirichlet-improvable can be equivalently phrased in terms of dynamics on the space $X_2$ of unimodular lattices in $\R^2$. 
{Namely,  take $K$ to be the unit ball with respect to the supremum norm $\|\cdot\|$ on $\R^n$, and let $g_t := \left[\begin{matrix}
e^{ t}& 0\\
 0 & e^{-t}\end{matrix}\right]$; 
then $\alpha$ is Dirichlet-improvable if and only if  the one-parameter trajectory 
\eq{traj}{\left\{g_t\left[\begin{matrix}
1 & \alpha\\
 0 & 1\end{matrix}\right] \Z^{2}: t\ge 0\right\}\subset X_2
}
 eventually stays away from a certain family (determined by $\psi$) of shrinking neighborhoods of  the critical locus $\mathcal{L}(K)$ described in Example \ref{square}.} 
 See  \cite[Proposition 4.5]{KWa} for a precise statement, and {\cite{Da,KM, KWe} for other instances of the correspondence between \da\ and dynamics, usually referred to as the Dani Correspondence.
 
 Recently, in \cite{AD,KR} a generalization of the property of being $\psi$-Dirichlet-improvable  was introduced with the supremum norm replaced by an arbitrary norm $\nu$ on $\R^2$. Similarly to the above, that property can be restated in terms of the trajectory  \equ{traj} eventually staying away from a  family of shrinking neighborhoods of  the critical locus $\mathcal{L}(K)$, where $K$ is the unit ball with respect to $\nu$ (see  \cite[Proposition 2.1]{KR}). This has been one of the motivating factors for the study of critical loci undertaken in the present paper.
 

\section{Acknowledgements}
This collaboration was made possible by the MIT PRIMES program. 
The authors would like to commend the dedication and enthusiasm of the organizers
and thank Tanya Khovanova in particular for valuable comments and guidance.
The second named author was supported by NSF grants  DMS-1600814  and DMS-1900560.

\Addresses

\end{document}